\documentclass[12pt]{article}
\usepackage[latin9]{inputenc}
\usepackage{geometry}
\geometry{verbose,tmargin=2cm,bmargin=3cm,lmargin=3.5cm,rmargin=4.2cm}
\usepackage{color}
\usepackage{float}
\usepackage{mathtools}
\usepackage{amsmath}
\usepackage{amsthm}
\usepackage{amssymb}
\usepackage{cancel}
\usepackage{microtype}
\usepackage[bookmarks=false,
 breaklinks=false,pdfborder={0 0 1},backref=section,colorlinks=true]
 {hyperref}
\hypersetup{pdftitle={Fixed Points},
 pagebackref=true,plainpages=false,unicode=false,pdftoolbar=true,pdfmenubar=true,pdffitwindow=false,pdfstartview={FitH},pdfnewwindow=true,linkcolor=blue,citecolor=red,filecolor=magenta,urlcolor=cyan}

\makeatletter

\floatstyle{ruled}
\newfloat{algorithm}{tbp}{loa}
\providecommand{\algorithmname}{Algorithm}
\floatname{algorithm}{\protect\algorithmname}

\newcommand{\lyxaddress}[1]{
	\par {\raggedright #1
	\vspace{1.4em}
	\noindent\par}
}
\theoremstyle{plain}
\newtheorem{thm}{\protect\theoremname}[section]
\theoremstyle{definition}
\newtheorem{defn}[thm]{\protect\definitionname}
\theoremstyle{plain}
\newtheorem{lyxalgorithm}[thm]{\protect\algorithmname}
\theoremstyle{definition}
\newtheorem{condition}[thm]{\protect\conditionname}
\theoremstyle{plain}
\newtheorem{lem}[thm]{\protect\lemmaname}
\theoremstyle{remark}
\newtheorem{rem}[thm]{\protect\remarkname}
\theoremstyle{plain}
\newtheorem{cor}[thm]{\protect\corollaryname}
\theoremstyle{definition}
\newtheorem{example}[thm]{\protect\examplename}

\usepackage[noadjust]{cite}

\numberwithin {equation}{section} 
\numberwithin {figure}{section} 
\usepackage{paralist}

\theoremstyle {plain}      
\newtheorem*{lem*}{\lemmaname }\theoremstyle {definition}      
\newtheorem*{rem*}{\remarkname }\theoremstyle {plain}      
\newtheorem*{cor*}{\corollaryname }\usepackage{cite}
\usepackage{graphicx}
\usepackage{graphpap}
\usepackage{ifthen}
\usepackage{mathrsfs}
\usepackage{enumitem}
\usepackage{pstricks}
\usepackage{algorithmic}
\usepackage{fancyhdr}
\usepackage[us,12hr]{datetime}
\usepackage{exscale}
\usepackage{tabularx}
\usepackage{latexsym}
\usepackage{verbatim}
\usepackage{multirow}
\usepackage{comment}
\usepackage{lscape}
\usepackage{booktabs}
\usepackage{algorithm}
\usepackage{algorithmic}
\usepackage{mathrsfs}

\numberwithin {equation}{section} 
\numberwithin {table}{section} 
\numberwithin {algorithm}{section} 
\usepackage[noabbrev]{cleveref}

\usepackage{refcount}

\crefname {thm}{Theorem}{Theorems} \Crefname {thm}{Theorem}{Theorems} \crefname {problem}{Problem}{Theorems}
\Crefname {problem}{Problem}{Theorems} \Crefname {assump}{Assumption}{Theorems}
\crefname {assump}{Assumption}{Theorems} \crefname {conjecture}{Conjecture}{Theorems}
\Crefname {conjecture}{Conjecture}{Theorems} \crefname {prop}{Proposition}{Propositions}
\Crefname {prop}{Proposition}{Propositions} \crefname {cor}{Corollary}{Corollaries}
\Crefname {cor}{Corollary}{Corollaries} \crefname {lem}{Lemma}{Lemmas} \Crefname {lem}{Lemma}{Lemmas}
\theoremstyle {definition} \crefname {defn}{definition}{definitions} \Crefname {defn}{Definition}{Definitions}
\crefname {conj}{Conjecture}{Conjectures} \Crefname {conj}{Conjecture}{Conjectures}
\crefname {remark}{Remark}{Remarks} \Crefname {remark}{Remark}{Remarks}
\crefname {rmk}{Remark}{Remarks} \Crefname {rmk}{Remark}{Remarks} \crefname {example}{Example}{Examples}
\Crefname {example}{Example}{Examples} \Crefname {case}{Case}{Cases} \Crefname {Case}{Case}{Cases}
\crefname {align}{}{} \Crefname {align}{}{} \crefname {equation}{}{} \Crefname {equation}{}{}

{}   

\usepackage{mdwlist}

\usepackage[misc]{ifsym}

\usepackage[normalem]{ulem}
\usepackage{enumitem}

\providecommand{\algorithmname}{Algorithm}
\providecommand{\conditionname}{Condition}
\providecommand{\corollaryname}{Corollary}
\providecommand{\definitionname}{Definition}
\providecommand{\examplename}{Example}
\providecommand{\lemmaname}{Lemma}
\providecommand{\remarkname}{Remark}
\providecommand{\theoremname}{Theorem}

\makeatother

\providecommand{\algorithmname}{Algorithm}
\providecommand{\conditionname}{Condition}
\providecommand{\corollaryname}{Corollary}
\providecommand{\definitionname}{Definition}
\providecommand{\examplename}{Example}
\providecommand{\lemmaname}{Lemma}
\providecommand{\remarkname}{Remark}
\providecommand{\theoremname}{Theorem}

\begin{document}
\title{\textbf{A necessary condition for the guarantee of the superiorization
method}}
\author{Kay Barshad$^{2,1}$, Yair Censor$^{1}$, Walaa Moursi$^{2}$, Tyler
Weames$^{2}$ \\
 and Henry Wolkowicz$^{2}$}
\date{July 13, 2024. Revised: January 23, 2025.}
\maketitle

\lyxaddress{$^{1}$Department of Mathematics, University of Haifa, Mt. Carmel,
Haifa 3498838, Israel \\
 \Letter ~ \href{mailto:yair@math.haifa.ac.il}{yair@math.haifa.ac.il}}

\lyxaddress{$^{2}$Department of Combinatorics and Optimization, Faculty of Mathematics,
University of Waterloo, Waterloo, Ontario, Canada N2L 3G1\\
 \Letter ~ \href{mailto:kbarshad@uwaterloo.ca}{kbarshad@uwaterloo.ca};
\href{mailto:walaa.moursi@uwaterloo.ca}{walaa.moursi@uwaterloo.ca};
\href{mailto:tweames@uwaterloo.ca}{tweames@uwaterloo.ca}; \href{mailto:hwolkowi@uwaterloo.ca}{hwolkowi@uwaterloo.ca}}
\begin{abstract}
We study a method that involves principally convex feasibility-seeking
and makes secondary efforts of objective function value reduction.
This is the well-known superiorization method (SM), where the iterates
of an asymptotically convergent iterative feasibility-seeking algorithm
are perturbed by objective function nonascent steps. We investigate
the question under what conditions a sequence generated by an SM algorithm
asymptotically converges to a feasible point whose objective function
value is superior (meaning smaller or equal) to that of a feasible
point reached by the corresponding unperturbed one (i.e., the exactly
same feasibility-seeking algorithm that the SM algorithm employs.)
This question is yet only partially answered in the literature. We
present a condition under which an SM algorithm that uses negative
gradient descent steps in its perturbations fails to yield such a
superior outcome. The significance of the discovery of this ``negative
condition'' is that it necessitates that the inverse of this condition
will have to be assumed to hold in any future guarantee result for
the SM. The condition is important for practitioners who use the SM
because it is avoidable in experimental work with the SM, thus increasing
the success rate of the method in real-world applications. 
\end{abstract}
\textbf{Keywords. }Feasibility-seeking; superiorization; bounded perturbations
resilience; dynamic string-averaging; strict Fejér monotonicity; guarantee
question of superiorization;

\section{Introduction}\label{sec:Introduction}

The superiorization method (SM) interlaces objective function value
reduction steps (called ``perturbations'') into a feasibility-seeking
algorithm (called the ``basic algorithm''), creating a, so called,
``superiorized version of the basic algorithm''. These steps cause
the objective function to reach lower values locally, prior to performing
the next feasibility-seeking iterations. A mathematical guarantee
has not been found to date that the overall process of the superiorized
version of the basic algorithm will not only retain its feasibility-seeking
nature, but also accumulate and preserve globally the objective function
reductions. For more information concerning the SM see, for example,
\cite{SM-bib-page}, \cite{assymetric-review}, \cite{cz14-feje},
\cite{ert-salkim-2023}, \cite{jmmtshsart} and \cite{belen-thesis}.

Numerous works that are cited in \cite{SM-bib-page} show that this
global function reduction of the SM occurs in practice in many real-world
applications. In addition to a partial answer to the guarantee of
lower objective function values given in \cite{censor-levy-2019}
with the aid of \textbf{the concentration of measure principle}, there
is also the partial result of \cite[Theorem 4.1]{cz14-feje} about
\textbf{strict Fejér monotonicity} of sequences generated by an SM
algorithm.

In the SM, any sequence generated by the superiorized version of a
bounded perturbation resilient (see Definition \ref{def:resilient}
below) iterative feasibility-seeking algorithm converges to a feasible
point. The ``guarantee question of the SM'' is to find conditions
that assure that the objective function value at this point is smaller
or equal to that of a point to which the SM algorithm would have converged
if no perturbations were applied, everything else being equal.

In the preliminaries Section \ref{sect:prels} we provide, for the
reader's convenience, a compact brief review about the SM and bounded
perturbations resilience of algorithms and present the Dynamic String-Averaging
projection (DSAP) feasibility-seeking algorithmic scheme of \cite{cz14-feje}
that is used in this paper. Recent reviews on the topic appear in
\cite{herman-jano-2020} and \cite{assymetric-review}.

In spite of all the above positive statements, examples of cases where
the SM fails have been constructed, see \cite{Aratcho-2023,belen-thesis}.
In this context, ``fails'' means that the guarantee question stated
above did not hold. So, the quest for recognizing the properties of
such situations in order to make statements on the guarantee problem
of the SM continues.

Proving mathematically a guarantee of global objective function value
reduction of the SM, compared to running its feasibility-seeking algorithm
without perturbations, will probably require some additional assumptions
on the feasible set. Such assumptions will apparently refer to the
objective function, the parameters involved, or even to the set of
permissible initialization points. We present in this note a ``negative
condition'', namely, a condition under which an SM algorithm that
is based on the Dynamic String-Averaging Projection (DSAP) feasibility-seeking
algorithmic scheme along with negative gradient descent steps in its
perturbations, will fail to yield a superior outcome.

The significance of this negative condition is twofold. On one hand,
it necessitates that a reverse statement that will nullify it will
have to be assumed to hold in any potential future guarantee result
for the SM. On the other hand, we show that when this condition does
hold, then the SM algorithm can produce a superior result only if
it converges to an optimal point. The latter statement is related
to the alternative in Theorem 4.1(a) of \cite{cz14-feje}, reproduced
in Theorem \ref{thm:fejer} in Section \ref{sec:prelim} below.

Although this note discusses theory, we care to make a comment about
computational aspects. In numerical examples one should present large-size
problems because for small problems many algorithms ``work'' and
performance improvements start to show when the problems are very
large and quite sparse. We care to mention this because this is why
the SM was envisioned in the first place. For example, in \cite{LinSup-2017}
it is shown that the advantage of an SM algorithm (called there ``LinSup''
for Linear Superiorization) for the data of a linear program, over
an LP algorithm (Malab's Simplex) increases as the problem sizes grow.
Many papers referenced in the continuously updated bibliography page
on the SM and perturbation resilience of algorithms \cite{SM-bib-page}
attest to the success of SM algorithms in a variety of real-world
practical large-size problems.

The original motivation of the SM was, and still is, to handle situations
in which reaching a feasible point in the nonempty intersection of
finitely many sets, in a tractable manner, is the principal task.
In such situations the SM provides a low cost perturbation that preserves
the convergence of the perturbed feasibility-seeking algorithm to
a feasible point while aiming to improve, aka superiorize, (reduce,
not necessarily optimize) the value of an objective function. Methods
to increase the effectiveness of the objective function reduction
steps in SM algorithms, without invalidating the bounded perturbation
resilience of the embedded feasibility-seeking algorithm, have recently
been published (see, for instance, \cite{Aratcho-2023}, \cite{Pakkaranang-2020},
\cite{censor-levy-2019} and \cite{ert-salkim-2023}). These include
a randomization approach and an approach with restarts of the step-sizes. 

Support for the reasoning of the SM may be borrowed from the American
scientist and Noble-laureate Herbert Simon who was in favor of \textquotedblleft satisficing\textquotedblright{}
rather than \textquotedblleft maximizing\textquotedblright . Satisficing
is a decision-making strategy that aims for a satisfactory or adequate
result, rather than the optimal solution. This is because aiming for
the optimal solution may necessitate needless expenditure of time,
energy and resources. The term \textquotedblleft satisfice\textquotedblright{}
was coined by Herbert Simon in 1956 \cite{Simon56}, see also: \href{https://en.wikipedia.org/wiki/Satisficing}{https://en.wikipedia.org/wiki/Satisficing}.

The paper is structured as follows. In Section \ref{sec:Some-applications}
we list a few (out of many) works that used the SM in practice and
in Section \ref{sec:prelim} we provide, for the reader's convenience,
a compact brief review about the SM and bounded perturbations resilience
of algorithms. Recent reviews appear in \cite{herman-jano-2020} and
\cite{assymetric-review}. In Section \ref{sec:prelim} we also summarize
the Dynamic String-Averaging Projection (DSAP) feasibility-seeking
algorithmic scheme of \cite{cz14-feje}, i.e.,~the algorithm that
is used in this paper. The negative condition and its consequences
appear in Section \ref{sec:negative}.

\section{Some applications that use the superiorization methodology}\label{sec:Some-applications}

In \cite{guenter-compare-sup-reg-2022}, Guenter et al. consider the
fully-discretized modeling of an \textbf{image reconstruction from
projections} problem that leads to a huge and very sparse system of
linear equations. Solving such systems, sometimes under limitations
on the computing resources, remains a challenge. The authors aim not
only at solving the linear system resulting from the modeling alone,
but consider the constrained optimization problem of minimizing an
objective function subject to the modeling constraints. To do so,
they recognize two fundamental approaches: (i) superiorization, and
(ii) regularization. Within these two methodological approaches they
evaluate 21 algorithms over a collection of 18 different ``phantoms''
(i.e., test problems), presenting their experimental results in very
informative ways.

In \cite{fink-2021} Fink et al. study the \textbf{nonconvex multi-group
multicast beamforming problem} with quality-of-service constraints
and per-antenna power constraints. They formulate a convex relaxation
of the problem as a semidefinite program in a real Hilbert space.
This allows them to approximate a point in the feasible set by iteratively
applying a bounded perturbation resilient fixed-point mapping. Inspired
by the superiorization methodology, they use this mapping as a basic
algorithm, and add in each iteration a small perturbation with the
intent to reduce the objective value and the distance to nonconvex
rank-constraint sets.

Pakkaranang et al. \cite{Pakkaranang-2020} construct a novel algorithm
for solving \textbf{non-smooth composite optimization problems}. By
using an inertial technique, they propose a modified proximal gradient
algorithm with outer perturbations and obtain strong convergence results
for finding a solution of a composite optimization problem. Based
on bounded perturbation resilience, they present their algorithm with
the superiorization method and apply it to image recovery problems.
They provide numerical experiments that show the efficiency of the
algorithm and compare it with previously known algorithms in signal
recovery.

Especially interesting is the recent work of Ma et al. \cite{sahinidis-2021}
who propose a novel decomposition framework for \textbf{derivative-free
optimization (DFO) algorithms} that significantly extends the scope
of current DFO solvers to larger-scale problems. They show that their
proposed framework closely relates to the superiorization methodology.

Many more publications on practical applications of the SM can be
found in \cite{SM-bib-page}; these include~\cite{jmmtshsart,janakmip,langis,lzhem,sohm},
to mention but a few.

\section{Preliminaries: superiorization and dynamic string-averaging}\label{sec:prelim}

\label{sect:prels}

In this section we provide some background concerning the superiorization
methodology and dynamic string-averaging. In Subsection \ref{subsec:SM}
we discuss the superiorization methodology and bounded perturbation
resilience. In Subsection \ref{subsec:prelim-DSAP} we recall the
convergence properties of the Dynamic String-Averaging Projection
(DSAP) feasibility-seeking algorithm and its superiorized version,
presented, respectively, in \cite{cz12} and \cite{cz14-feje}.

\subsection{The superiorization methodology}\label{subsec:SM}

The superiorization methodology (SM) was born when the terms and notions
``superiorization'' and ``perturbation resilience'', in the present
context, first appeared in the 2009 paper \cite{dhc09}. This followed
its 2007 forerunner by Butnariu et al. \cite{bdhk07}. The ideas have
some of their roots in the 2006 and 2008 papers of Butnariu et al.
\cite{brz06,ButReich07,brz08} where it was shown that if iterates
of a nonexpansive operator converge for any initial point, then its
inexact iterates with summable errors also converge. Since its inception
in 2007, the SM has evolved and gained prominence. Recent publications
on the SM devoted to either weak or strong superiorization, though
without yet using these terms, are: \cite{bdhk07,cdh10,cz12,dhc09,rand-conmath,gh13,hd08,hgdc12,ndh12,pscr10}.
Many of these contain a detailed description of the SM, its motivation,
and an up-to-date review of SM-related previous work.

The Webpage\footnote{\href{http://math.haifa.ac.il/yair/bib-superiorization-censor.html\#top}{http://math.haifa.ac.il/yair/bib-superiorization-censor.html\#top},
last updated on December 26, 2024 with 191 items.} \cite{SM-bib-page} is dedicated to superiorization and perturbation
resilience of algorithms and contains a continuously updated bibliography
on the subject. It is a source for the wealth of work done in this
field to date, including two dedicated special issues of journals
\cite{SI-inverse-prob-2017} and \cite{SI-JANO-2020}.

Our recent review \cite{assymetric-review} can serve as an introduction
to the SM; also \cite{gth-sup4IA} and \cite{herman-jano-2020} are
very helpful. Just to make the continued reading here more convenient
for the reader we give below some of the fundamental notions of the
SM.

Throughout the rest of the paper we consider a real Hilbert space
$X$ with the norm $\left\Vert \cdot\right\Vert $ and a real-valued,
convex and continuous function $\phi:X\rightarrow R$. For a point
$z\in X$, we denote by $\partial\phi\left(z\right)$ the subgradient
set of $\phi$ at $z$. For a point $x\in X$ and a nonempty, closed
and convex subset $S$ of $X$, we denote by $d\left(x,S\right)$
the distance of $x$ to $S$.

Bounded perturbation resilience is now a topic of interest in the
current literature (see, for instance, \cite{Reich_Taiwo}). We recall
its definition. 
\begin{defn}
[{\cite[Definition 1]{cdh10}}] \label{def:resilient}\textbf{Bounded
perturbation resilience (BPR)}. Let $\Gamma\subseteq X$ be a given
nonempty subset of $X$. An algorithmic operator $\mathcal{A}:X\rightarrow X$
is said to be \textbf{bounded perturbations resilient with respect
to $\Gamma$}\emph{ }if the following is true: If a sequence $\{x^{k}\}_{k=0}^{\infty},$
generated by the basic algorithm $x^{k+1}:=\mathcal{A}(x^{k}),$ for
all $k\geq0,$ converges to a point in $\Gamma$ for all $x^{0}\in X$,
then any sequence $\{y^{k}\}_{k=0}^{\infty}$ of points in $X$ that
is generated by the algorithm $y^{k+1}=\mathcal{A}(y^{k}+\beta_{k}v^{k}),$
for all $k\geq0,$ also converges to a point in $\Gamma$ for all
$y^{0}\in X$ provided that, for all $k\geq0$, $\beta_{k}v^{k}$
are \textbf{bounded perturbations}, meaning that $\beta_{k}\geq0$
for all $k\geq0$ such that $\sum_{k=0}^{\infty}\beta_{k}<\infty$,
and that the vector sequence $\{v^{k}\}_{k=0}^{\infty}$ is bounded.
\\
A basic algorithm of the form $x^{k+1}:=\mathcal{A}(x^{k}),$ for
all $k\geq0,$is said to be \textbf{bounded perturbations resilient
with respect to $\Gamma$ }if its algorithmic operator $\mathcal{A}$
is bounded perturbations resilient with respect to $\Gamma$.\\
\end{defn}

Algorithm \ref{alg:super-process}, presented here, is a superiorized
version of the basic (feasibility-seeking) algorithm governed by\textbf{
}$\mathcal{A}$.

\begin{algorithm}
\begin{lyxalgorithm}
\textbf{\label{alg:super-process}}Superiorized version of the basic
(feasibility-seeking) algorithm governed by $\mathcal{A}$.
\end{lyxalgorithm}

\textbf{\textit{(0) Initialization}}\textit{: Let $N$ be a natural
number and let $y^{0}\in X$ be an arbitrary user-chosen vector.}

\textbf{\textit{(1)}}\textit{ }\textbf{\textit{Iterative step}}\textit{:
Given a current iteration vector $y^{k}$ , pick an $N_{k}\in\{1,2,\dots,N\}$
and start an inner loop of calculations as follows:}

\textbf{\textit{(1.1) Inner loop initialization}}\textit{: Define
$y^{k,0}=y^{k}.$}

\textbf{\textit{(1.2) Inner loop step: }}\textit{Given $y^{k,n},$
as long as $n<N_{k},$ do as follows:}

\textbf{\textit{(1.2.1) }}\textit{Pick a $0<\beta_{k,n}\leq1$ in
a way that guarantees that 
\[
\sum_{k=0}^{\infty}\sum_{n=0}^{N_{k}-1}\beta_{k,n}<\infty.
\]
}

\textbf{\textit{(1.2.2)}}\textit{ Pick an ${\displaystyle s^{k,n}\in\partial\phi(y^{k,n})}$
and define direction vectors $v^{k,n}$ as follows: 
\[
v^{k,n}:=\left\{ \begin{array}{cc}
-\frac{{\displaystyle s^{k,n}}}{{\displaystyle \left\Vert s^{k,n}\right\Vert }}, & \text{if }0\notin\partial\phi(y^{k,n}),\\
0, & \text{if }0\in\partial\phi(y^{k,n}).
\end{array}\right.
\]
}

\textbf{\textit{(1.2.3) }}\textit{Calculate the perturbed iterate
\begin{equation}
y^{k,n+1}:=y^{k,n}+\beta_{k,n}v^{k,n}\label{eq:2.11}
\end{equation}
and if $n+1<N_{k}$ set $n\leftarrow n+1$ and go to }\textbf{\textit{(1.2)}}\textit{,
otherwise go to }\textbf{\textit{(1.3)}}\textit{.}

\textbf{\textit{(1.3) }}\textit{Exit the inner loop with the vector
$y^{k,N_{k}}$}

\textbf{\textit{(1.4) }}\textit{Calculate 
\begin{equation}
y^{k+1}:=\mathcal{A}(y^{k,N_{k}})\label{eq:-6}
\end{equation}
set $k\leftarrow k+1$ and go back to }\textbf{\textit{(1)}}\textit{.} 
\end{algorithm}

\subsection{The convergence properties of the DSAP algorithm and its superiorized
version}\label{subsec:prelim-DSAP}

The Dynamic String-Averaging Projection (DSAP) method of \cite{cz12}
constitutes a family of algorithmic operators that can play the role
of $\mathcal{A}$ in Algorithm \ref{alg:super-process}. For each
$i=1,2,\dots,m,$ let $C_{i}$ be a nonempty, closed and convex subset
of $X$ and denote by $P_{i}:=P_{C_{i}}$ the metric projection onto
the set $C_{i}.$ An \textbf{index vector} is a vector $t=(t_{1},t_{2},\dots,t_{q})$
such that $t_{s}\in\{1,2,\dots,m\}$ for all $s=1,2,\dots,q$, whose
length is $\ell(t)=q.$ The composition of the individual projections
onto the sets whose indices appear in the index vector $t$ is $P[t]:=P_{t_{q}}\cdots P_{t_{2}}P_{t_{1}}$,
called a \textbf{string operator}.

A finite set $\Omega$ of index vectors is called \textbf{fit} if
for each $i\in\{1,2,\dots,m\}$, there exists a vector $t=(t_{1},t_{2},\dots,t_{q})\in\Omega$
such that $t_{s}=i$ for some $s\in\{1,2,\dots,q\}$. Denote by $\mathcal{M}$
the collection of all pairs $(\Omega,w)$, where $\Omega$ is a finite
fit set of index vectors and $w:\Omega\rightarrow(0,\infty)$ is such
that $\sum_{t\in\Omega}w(t)=1.$

For any $(\Omega,w)\in\mathcal{M}$ define the convex combination
of the end-points of all strings defined by members of $\Omega$ 
\[
P_{\Omega,w}\left(x\right):=\sum_{t\in\Omega}w\left(t\right)P\left[t\right]\left(x\right)
\]
for each $x\in X$. Let $\Delta\in(0,1/m)$, and fix an arbitrary
integer $\bar{q}\geq m.$ Denote by $\mathcal{M}_{\ast}\equiv\mathcal{M}_{\ast}(\Delta,\bar{q})$
the set of all $(\Omega,w)\in\mathcal{M}$ such that the lengths of
the strings are bounded and the weights are all bounded away from
zero, i.e., 
\[
\mathcal{M}_{\ast}:=\{(\Omega,w)\in\mathcal{M\mid}\text{ }\ell(t)\leq\bar{q}\text{ and }w(t)\geq\Delta,\text{ }\forall\text{ }t\in\Omega\}.
\]

The convergence properties and bounded perturbation resilience of
the DSAP method were analyzed in \cite{cz12}.

\begin{algorithm}
\begin{lyxalgorithm}
\label{alg:DSAP}\textbf{ The DSAP method with variable strings and
variable weights}
\end{lyxalgorithm}

\textbf{\textit{Initialization}}\textit{: Select an arbitrary $x^{0}\in X$,}

\textbf{\textit{Iterative step}}\textit{: Given a current iteration
vector $x^{k}$ pick a pair $(\Omega_{k},w_{k})\in\mathcal{M}_{\ast}$
and calculate the next iteration vector $x^{k+1}$ by 
\[
x^{k+1}:=P_{\Omega_{k},w_{k}}(x^{k})\text{.}
\]
} 
\end{algorithm}

The strong convergences of Algorithm \ref{alg:DSAP} and its superiorized
version, as demonstrated in Theorems \ref{thm:=00003D000020dsap}
and \ref{thm:fejer} below, are based on the following bounded regularity
condition, see \cite[Definition 5.1]{BC96}. This condition is always
satisfied in the case where the space $X$ is finite dimensional (see
\cite[Proposition 5.4]{BC96}). 
\begin{condition}
\label{bound_reg} Let $\{C_{i}\}{}_{i=1}^{m}$ be a family of nonempty,
closed and convex subsets of $X$ with a nonempty intersection $C$.
For each $\varepsilon>0$ and $M>0$, there exists $\delta>0$ such
that for each $x\in X$, we have the following implication
\[
\left\{ \begin{array}{c}
\left\Vert x\right\Vert <M\\
d\left(x,C_{i}\right)<\delta\,\mathrm{\,for\,\,all\,\,}i=1,2,\dots,m
\end{array}\Longrightarrow d\left(x,C\right)<\varepsilon.\right.
\]
\end{condition}

\begin{thm}[{{\cite[Theorem 12]{cz12}}}]
\label{thm:=00003D000020dsap}Assume that $\{C_{i}\}{}_{i=1}^{m}$
is a family of nonempty, closed and convex subsets of $X$ with a
nonempty intersection $C$, satisfying Condition \ref{bound_reg}.
Let $\left\{ \beta_{k}\right\} _{k=0}^{\infty}$ be a sequence of
non-negative numbers such that $\sum_{k=0}^{\infty}\beta_{k}<\infty$,
let $\left\{ v^{k}\right\} _{k=0}^{\infty}\subset X$ be a norm-bounded
sequence, let $\left\{ (\Omega_{k},w_{k})\right\} _{k=0}^{\infty}\in\mathcal{M}_{\ast}$
for all $k=0,1,\dots$. Then any sequence $\left\{ y^{k}\right\} _{k=0}^{\infty}$,
generated by the iterative formula 
\[
y^{k+1}:=P_{\Omega_{k},w_{k}}\left(y^{k}+\beta_{k}v^{k}\right),
\]
converges in the norm of $X$, and its limit belongs to $C$. That
is, Algorithm \ref{alg:DSAP} converges to a point in $C$ and its
algorithmic operator is bounded perturbation resilient with respect
to $C$. 
\end{thm}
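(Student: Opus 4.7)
The plan is to proceed in four stages: quasi-Fej\'er monotonicity, asymptotic regularity, pointwise distance-to-set vanishing, and finally conversion to norm convergence via the bounded regularity condition.

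First, I would fix an arbitrary $c\in C$. Since each $P_i$ is firmly nonexpansive and fixes $c$, every string operator $P[t]$ is nonexpansive with $c$ as a fixed point, and hence so is the convex combination $P_{\Omega_k,w_k}$. Writing $\tilde y^k := y^k+\beta_k v^k$, this yields
\[
\|y^{k+1}-c\|=\|P_{\Omega_k,w_k}(\tilde y^k)-c\|\le \|\tilde y^k-c\|\le\|y^k-c\|+\beta_k\|v^k\|.
\]
Since $\{\beta_k\}$ is summable and $\{v^k\}$ is bounded, the added terms are summable, so $\{y^k\}$ is bounded and $\|y^k-c\|$ converges for every $c\in C$ (quasi-Fej\'er monotonicity).

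Second, I would extract asymptotic regularity of the unperturbed step, i.e.,\ $\|P_{\Omega_k,w_k}(\tilde y^k)-\tilde y^k\|\to0$. The standard device is to use firm nonexpansivity of each $P_{t_s}$ along a string: for any $c\in C$,
\[
\|P[t](x)-x\|^2 \;\le\; q\sum_{s=1}^{q}\bigl(\|P_{t_{s-1}}\cdots P_{t_1}x-c\|^2-\|P_{t_s}\cdots P_{t_1}x-c\|^2\bigr),
\]
which telescopes and bounds $\|P[t](x)-x\|^2$ by $\bar q(\|x-c\|^2-\|P[t](x)-c\|^2)$. Convexity of $\|\cdot-c\|^2$ and the lower bound $w_k(t)\ge\Delta$ then let me transfer this to $P_{\Omega_k,w_k}$, giving
\[
\sum_k \|P_{\Omega_k,w_k}(\tilde y^k)-\tilde y^k\|^2 \;\le\; C'\sum_k\bigl(\|\tilde y^k-c\|^2-\|y^{k+1}-c\|^2\bigr),
\]
whose right-hand side is finite because $\|y^k-c\|^2$ converges and the $\tilde y^k$ perturbation is summable. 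Hence the summand vanishes, and since $\beta_k\to0$, also $\|y^{k+1}-y^k\|\to0$.

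Third, I would use the fitness of $\Omega_k$ together with the same firm nonexpansivity identity to deduce $d(y^k,C_i)\to0$ for each $i\in\{1,\dots,m\}$. For each $k$ there is at least one string $t\in\Omega_k$ that contains index $i$ at some position $s\le\bar q$; firm nonexpansivity of $P_i$ in that string, together with $w_k(t)\ge\Delta$, bounds $d(y^k,C_i)^2$ by a constant multiple of the already-established asymptotically vanishing quantities. At this point I would invoke Condition~\ref{bound_reg}, applied to the bounded sequence $\{y^k\}$, to promote the pointwise distance decay $d(y^k,C_i)\to0$ into $d(y^k,C)\to0$. Combined with the fact that $\|y^k-c\|$ converges for every $c\in C$, a standard Opial-type argument (or direct: pick cluster points in $C$ and use uniqueness of the Fej\'er limit) yields norm convergence of $\{y^k\}$ to a point of $C$. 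The bounded-perturbation resilience with respect to $C$ is then immediate from the form of the recursion.

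The main obstacle I anticipate is the second stage: carefully tracking the firm-nonexpansivity telescoping through a whole string and then through the weighted convex combination, while keeping the summable error from the perturbations $\beta_k v^k$ under control. Once asymptotic regularity is in hand, the passage to $d(y^k,C_i)\to0$ is essentially bookkeeping using $\bar q$ and $\Delta$, and the final norm-convergence step is a clean consequence of Condition~\ref{bound_reg} together with quasi-Fej\'er monotonicity.
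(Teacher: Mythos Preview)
The paper does not actually prove this theorem: it is quoted verbatim as \cite[Theorem 12]{cz12} in the preliminaries section and no proof is given, so there is nothing in the present paper to compare your argument against.

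That said, your four-stage outline is the standard and correct route used in \cite{cz12} and the surrounding literature: (1) quasi-Fej\'er monotonicity via nonexpansivity of each $P_{\Omega_k,w_k}$ plus summable errors; (2) a telescoping firm-nonexpansivity estimate along each string, combined with convexity of $\|\cdot-c\|^2$ and the uniform bounds $\ell(t)\le\bar q$, $w(t)\ge\Delta$, to get $\sum_k\|P_{\Omega_k,w_k}(\tilde y^k)-\tilde y^k\|^2<\infty$; (3) fitness of each $\Omega_k$ to extract $d(y^k,C_i)\to0$ for every $i$; (4) bounded regularity (Condition~\ref{bound_reg}) to upgrade this to $d(y^k,C)\to0$, and then a Cauchy argument using quasi-Fej\'er monotonicity to obtain norm convergence to a point of $C$. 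One small point worth making explicit in your write-up of stage~(4): in an infinite-dimensional Hilbert space an Opial-type argument by itself yields only weak convergence; the passage to norm convergence really uses $d(y^k,C)\to0$ together with the quasi-Fej\'er inequality (pick $K$ large, set $p:=P_C y^K$, and bound $\|y^j-y^K\|$ for $j\ge K$ through $p$), which is exactly how Condition~\ref{bound_reg} enters.
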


Using the algorithmic operator of the DSAP feasibility-seeking Algorithm
\ref{alg:DSAP} as the algorithmic operator $\mathcal{A}$ in Algorithm
\ref{alg:super-process}, we recover the following main Theorem 4.1
of \cite{cz14-feje}. 
\begin{thm}[{{\cite[Theorem 4.1]{cz14-feje}}}]
\label{thm:fejer} Assume that $\{C_{i}\}{}_{i=1}^{m}$ is a family
of nonempty, closed and convex subsets of $X$ with a nonempty intersection
$C$, satisfying Condition \ref{bound_reg}. Set $C_{\min}:=\{x\in C\mid\;\phi(x)\leq\phi(y){\text{ for all }}y\in C$.
Let $C_{\ast}\subseteq C_{\min}$ be a nonempty subset of $C_{\min}$,
let $r_{0}\in(0,1]$ and $\bar{L}\geq1$ be such that, ${\text{for all }}x\in C_{\ast}{\text{ and all }}y$
such that$\;||x-y||<r_{0},$ 
\[
|\phi(x)-\phi(y)|\leq\bar{L}||x-y||.
\]
Further, suppose that $\{(\Omega_{k},w_{k})\}_{k=0}^{\infty}\subset\mathcal{M}_{\ast}.$
Then any sequence $\{y^{k}\}_{k=0}^{\infty},$ generated by Algorithm
\ref{alg:super-process}, the superiorized version of the DSAP algorithm,
converges in the norm of $X$ to a $y^{\ast}\in C$. And exactly one
of the following two alternatives holds: 
\begin{enumerate}
\item \label{item:altone}$y^{\ast}\in C_{\min}$. 
\item \label{item:alttwo}$y^{\ast}\notin C_{\min}$ and there exist a natural
number $k_{0}$ and a $c_{0}\in(0,1)$ such that for each $x\in C_{\ast}$
and for each integer $k\geq k_{0}$, 
\[
\Vert y^{k+1}-x\Vert^{2}\leq\Vert y^{k}-x\Vert^{2}-c_{0}\sum_{n=1}^{N_{k}-1}\beta_{k,n}.
\]
\end{enumerate}
\end{thm}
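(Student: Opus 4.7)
The plan is to split the argument into a convergence step and a dichotomy step. For convergence, I would invoke Theorem \ref{thm:=00003D000020dsap} by rewriting Algorithm \ref{alg:super-process} as $y^{k+1} = P_{\Omega_k, w_k}(y^k + e^k)$, where $e^k := \sum_{n=0}^{N_k - 1}\beta_{k,n} v^{k,n}$ is a single aggregated perturbation. Since $\|v^{k,n}\| \leq 1$ by step (1.2.2) and $\sum_{k,n}\beta_{k,n} < \infty$ by (1.2.1), writing $e^k = \tilde\beta_k \tilde v^k$ with $\tilde\beta_k := \|e^k\|$ summable and $\|\tilde v^k\| \leq 1$ places the iteration in exactly the form treated by Theorem \ref{thm:=00003D000020dsap}; hence $y^k \to y^*$ for some $y^* \in C$, regardless of which alternative holds.

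For the dichotomy, if $y^* \in C_{\min}$ then alternative (i) is immediate. Assume otherwise, fix any $x \in C_{\ast} \subseteq C_{\min}$, and set $\gamma := \phi(y^*) - \phi(x) > 0$; this quantity does not depend on the choice in $C_{\ast}$. The strategy is to extract a uniform one-step decrease of $\|y^{k,n} - x\|^2$ per inner-loop perturbation. Two preparatory ingredients are needed. (a) Continuity of $\phi$, the convergence $y^k \to y^*$, and the drift bound $\|y^{k,n} - y^k\| \leq \sum_{j=0}^{n-1}\beta_{k,j} \to 0$ together yield $\phi(y^{k,n}) \geq \phi(x) + \gamma/2$ for all $k \geq k_0$ and all $n \in \{0,\ldots,N_k-1\}$. (b) On the same neighborhood of $y^*$, the subgradients $s^{k,n}$ are bounded in norm by some $M > 0$, because the subdifferential of a real-valued continuous convex function on a Hilbert space is locally bounded; note that this also rules out $v^{k,n} = 0$, since $0 \in \partial \phi(y^{k,n})$ would make $y^{k,n}$ a global minimizer, contradicting (a).

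With these in hand, convexity gives $\langle s^{k,n}, y^{k,n} - x\rangle \geq \phi(y^{k,n}) - \phi(x) \geq \gamma/2$, whence $\langle v^{k,n}, y^{k,n} - x\rangle \leq -\gamma/(2M)$. Expanding the square,
\[
\|y^{k,n+1} - x\|^2 = \|y^{k,n} - x\|^2 + 2\beta_{k,n}\langle v^{k,n}, y^{k,n} - x\rangle + \beta_{k,n}^2\|v^{k,n}\|^2,
\]
using $\|v^{k,n}\| \leq 1$, and enlarging $k_0$ so that $\beta_{k,n} \leq \gamma/(2M)$ absorbs the quadratic remainder into half of the linear term produces $\|y^{k,n+1} - x\|^2 \leq \|y^{k,n} - x\|^2 - (\gamma/(2M))\beta_{k,n}$. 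Telescoping across the inner loop and then invoking that each $P_i$ fixes $x$ (since $x \in C \subseteq C_i$) and is nonexpansive, so that every string operator $P[t]$ and every convex combination $P_{\Omega_k, w_k}$ is nonexpansive with respect to $x$, gives
\[
\|y^{k+1} - x\|^2 \leq \|y^{k,N_k} - x\|^2 \leq \|y^k - x\|^2 - \frac{\gamma}{2M}\sum_{n=0}^{N_k-1}\beta_{k,n}.
\]
Dropping the nonnegative $n=0$ summand recovers alternative (ii) with $c_0 := \min\{\gamma/(2M),\,1/2\} \in (0,1)$.

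The main obstacle I anticipate is coordinating the threshold $k_0$: a single value must simultaneously place every inner-loop iterate $y^{k,n}$ in a common neighborhood of $y^*$ on which both the $\phi$-lower bound and the subgradient bound hold, and make each $\beta_{k,n}$ small enough to absorb the quadratic remainder. A conceptual subtlety is that $y^*$ need not belong to $C_{\ast}$, so the paper's Lipschitz hypothesis attached to $C_{\ast}$ does not by itself bound $\|s^{k,n}\|$ at a generic $y^{k,n}$ near $y^*$; invoking the local boundedness of the subdifferential of a continuous convex function on a Hilbert space bypasses this cleanly.
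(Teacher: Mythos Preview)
The paper does not supply its own proof of Theorem~\ref{thm:fejer}; the result is quoted verbatim from \cite[Theorem~4.1]{cz14-feje} and used as background for Section~\ref{sec:negative}. So there is no in-paper argument against which to compare your proposal.

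That said, your outline is sound and tracks what one would expect the original argument in \cite{cz14-feje} to look like: reduce convergence to Theorem~\ref{thm:=00003D000020dsap} by aggregating the inner-loop perturbations into a single summable error, and in the non-optimal case exploit the subgradient inequality $\langle s^{k,n},\,y^{k,n}-x\rangle\ge\phi(y^{k,n})-\phi(x)$ together with a uniform bound on $\|s^{k,n}\|$ to obtain a per-step decrease that telescopes and survives the nonexpansive outer step $P_{\Omega_k,w_k}$. Your handling of the threshold $k_0$ (forcing all inner iterates into a fixed neighborhood of $y^*$ and all $\beta_{k,n}$ below $\gamma/(2M)$) and your observation that $0\notin\partial\phi(y^{k,n})$ eventually are both correct.

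One point worth flagging: you bound the subgradients via local boundedness of the subdifferential of a continuous convex function near $y^*$, rather than via the stated Lipschitz hypothesis on $\phi$ around $C_\ast$. This is legitimate in a Hilbert space and, as you note, sidesteps the awkwardness that $y^*$ need not lie in $C_\ast$; it does, however, leave the hypothesis involving $r_0$ and $\bar L$ unused in your argument. In the original source that hypothesis presumably plays a role (possibly to make the constant $c_0$ explicit or to cover a setting where mere continuity is assumed rather than real-valuedness on all of $X$), so if you were writing this up formally you might remark that the assumption is stronger than what your route requires.
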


This shows that $\{y^{k}\}_{k=k_{0}}^{\infty}$ is strictly Fejér-monotone
with respect to\textbf{ }$C_{\ast},$ i.e., since $c_{0}\sum_{n=1}^{N_{k}-1}\beta_{k,n}>0$,
we get that for every $x\in C_{\ast}$, the inequality $\Vert y^{k+1}-x\Vert^{2}<\Vert y^{k}-x\Vert^{2}$
holds for all $k\geq k_{0}.$ The strict Fejér-monotonicity however
does not guarantee convergence to a constrained minimum point. Rather,
it only says that the so-created feasibility-seeking sequence $\{y^{k}\}_{k=0}^{\infty}$
has the additional property of getting strictly closer, without necessarily
converging, to the points of a subset of the solution set of the constrained
minimization problem.

Published experimental results repeatedly confirm that global reduction
of the value of the objective function $\phi$ is indeed achieved,
without losing the convergence toward feasibility, see \cite{bdhk07,cdh10,cz12,dhc09,rand-conmath,gh13,hd08,hgdc12,ndh12,pscr10}.
In some of these cases the SM returns a lower value of the objective
function $\phi$ than an exact minimization method with which it is
compared, e.g., \cite{cdhst14}.

\section{The \textquotedblleft negative condition\textquotedblright{} on the
superiorization method}\label{sec:negative}

We consider the Dynamic String-Averaging Projection (DSAP) feasibility-seeking
algorithmic scheme of \cite{cz14-feje} and its superiorized version,
as presented above in Subsection \ref{subsec:prelim-DSAP}. Speaking
specifically about \cite[Algorithm 4.1]{cz14-feje}, we know that
under the assumptions of Theorem \ref{thm:fejer}, exactly one of
two things must happen, i.e., alternative \textit{\ref{item:altone}}
or \textit{\ref{item:alttwo}}. This is a non-constructive theorem
because it tells nothing about when each of the alternatives can occur.

If alternative \textit{\ref{item:altone} }holding is the case, then
it is correct to say that the sequence generated by the superiorized
version of the bounded perturbation resilient feasibility-seeking
Algorithm \ref{alg:DSAP} converges to a feasible point that has objective
function value smaller or equal to that of a point to which this algorithm
would converge if no perturbations were applied. This would be true
in this case because $y^{*}\in C_{\min}$ and the feasibility-seeking
algorithm cannot do better.

The question remains for the case of alternative \textit{\ref{item:alttwo}}.
Namely, can we give conditions under which, if alternative \textit{\ref{item:alttwo}
}holds, we will have that $\phi(y^{*})\leqslant\phi(x^{*})$, where
$x^{*}$ is the limit of the same feasibility-seeking algorithm that
is used in this SM algorithm when it is run without perturbations.
This rerun without perturbations must have everything else equal,
such as initialization point, relaxation parameters, order of constraints
within the sweeps, etc.

The desire to distinguish between the alternatives \textit{\ref{item:altone}
}and \textit{\ref{item:alttwo}} of Theorem \ref{thm:fejer} leads
us to the next lemma which gives a condition under which a limiting
feasible point cannot belong to the solution set of the constrained
minimization problem ${\rm min}\:\{\phi\left(x\right)\mid x\in C\}.$
This is the ``negative condition'' eluded to above, under which
an SM algorithm that is based on the Dynamic String-Averaging Projection
(DSAP) feasibility-seeking algorithmic scheme, which uses negative
subgradient steps in its perturbations, will fail to yield a superior
outcome. 
\begin{lem}
\label{lem:divergeSM} Let $\{C_{i}\}{}_{i=1}^{m}$ be a family of
nonempty, closed and convex subsets of $X$ with a nonempty intersection
$C$. Let $D\subset C$ be a nonempty subset of $C$, and let $r\ge1$
be a real number. Assume that $\left\{ y^{k}\right\} _{k=0}^{\infty}$
is any sequence generated by Algorithm \ref{alg:super-process}, with
positive step-sizes $\left\{ \left\{ \beta_{k,n}\right\} _{n=0}^{N_{k}-1}\right\} _{k=0}^{\infty}$,
direction vectors $\left\{ \left\{ v^{k,n}\right\} _{n=0}^{N_{k}-1}\right\} _{k=0}^{\infty}$,
and with $y^{0}\in X$ an arbitrary initialization point. Suppose
that the family $\{C_{i}\}{}_{i=1}^{m}$ satisfies Condition \ref{bound_reg}.
Then
\begin{enumerate}
\item \label{item:limitone} If $\hat{c}\in C$ is a point such that 
\begin{equation}
\left\Vert y^{0}-\hat{c}\right\Vert \le\left(r-1\right)\sum_{n=0}^{N_{0}-1}\beta_{0,n},\label{eq:}
\end{equation}
then the sequence $\left\{ y^{k}\right\} _{k=0}^{\infty}$ satisfies
\begin{equation}
\left\Vert y^{k}-\hat{c}\right\Vert \le r\sum_{\ell=0}^{k-1}\sum_{n=0}^{N_{\ell}-1}\beta_{\ell,n}\label{eq:claim-i}
\end{equation}
\textup{for all $k\geq1$.}
\item \label{item:limittwo} If, additionally, 
\begin{equation}
d\left(\hat{c},D\right)>r\sum_{\ell=0}^{\infty}\sum_{n=0}^{N_{\ell}-1}\beta_{\ell,n}\label{eq:additionally}
\end{equation}
holds, then the limit point $y^{*}$ of the sequence $\left\{ y^{k}\right\} _{k=0}^{\infty}$
(which exists by Theorem \ref{thm:fejer}) satisfies $y^{*}\notin D.$ 
\end{enumerate}
\end{lem}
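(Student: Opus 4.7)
The plan is to prove part \ref{item:limitone} by induction on $k$, with the key observation that since $\hat{c}\in C\subseteq C_i$ for each $i$, every metric projection $P_i$ fixes $\hat{c}$; hence the string operator $P[t]$ fixes $\hat{c}$ for every index vector $t$, and therefore so does the convex combination $P_{\Omega_k,w_k}$. Since metric projections onto closed convex sets are nonexpansive and a convex combination of nonexpansive operators is nonexpansive, $P_{\Omega_k,w_k}$ is nonexpansive, and I get the key inequality $\|y^{k+1}-\hat{c}\|=\|P_{\Omega_k,w_k}(y^{k,N_k})-P_{\Omega_k,w_k}(\hat{c})\|\le\|y^{k,N_k}-\hat{c}\|$. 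Combined with the inner-loop update $y^{k,n+1}=y^{k,n}+\beta_{k,n}v^{k,n}$ and the bound $\|v^{k,n}\|\le 1$ (which is immediate from the definition in step \textbf{(1.2.2)} of Algorithm~\ref{alg:super-process}), the triangle inequality applied $N_k$ times yields
\[
\|y^{k+1}-\hat{c}\|\le\|y^k-\hat{c}\|+\sum_{n=0}^{N_k-1}\beta_{k,n}.
\]

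For the base case $k=1$, I combine the above (with $k=0$) with hypothesis \eqref{eq:} to obtain $\|y^1-\hat{c}\|\le(r-1)\sum_{n=0}^{N_0-1}\beta_{0,n}+\sum_{n=0}^{N_0-1}\beta_{0,n}=r\sum_{n=0}^{N_0-1}\beta_{0,n}$, matching \eqref{eq:claim-i} for $k=1$. For the induction step, I assume \eqref{eq:claim-i} at level $k$ and add $\sum_{n=0}^{N_k-1}\beta_{k,n}$ to both sides; the right-hand side becomes $r\sum_{\ell=0}^{k-1}\sum_{n=0}^{N_\ell-1}\beta_{\ell,n}+\sum_{n=0}^{N_k-1}\beta_{k,n}$, which is bounded above by $r\sum_{\ell=0}^{k}\sum_{n=0}^{N_\ell-1}\beta_{\ell,n}$ because $r\ge 1$. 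This is precisely \eqref{eq:claim-i} at level $k+1$, so the induction closes.

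For part \ref{item:limittwo}, I note that convergence of $\{y^k\}$ to some $y^*\in C$ already follows from Theorem~\ref{thm:=00003D000020dsap} applied to the effective perturbations $u^k:=\sum_{n=0}^{N_k-1}\beta_{k,n}v^{k,n}$, whose norms are summable and can be rescaled to fit the bounded-sequence hypothesis. Taking $k\to\infty$ in \eqref{eq:claim-i} and using continuity of the norm gives
\[
\|y^*-\hat{c}\|\le r\sum_{\ell=0}^{\infty}\sum_{n=0}^{N_\ell-1}\beta_{\ell,n}.
\]
If $y^*$ were in $D$, then $d(\hat{c},D)\le\|y^*-\hat{c}\|$ would contradict the strict inequality \eqref{eq:additionally}, so $y^*\notin D$.

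There is no serious obstacle: the proof is a clean induction followed by a limit-and-contradict argument. The only subtle point to watch is the role of the factor $r$; it must appear on the right-hand side of \eqref{eq:claim-i} (rather than $1$) precisely so that the induction step can absorb the new increment $\sum_{n=0}^{N_k-1}\beta_{k,n}$ without inflation, and the hypothesis \eqref{eq:} is calibrated with $(r-1)$ to supply exactly the slack needed for the base case. I would also emphasize that no convexity or Lipschitz hypothesis on $\phi$ is used—the argument depends only on the firm non-expansiveness of projections and the unit-norm bound on the chosen subgradient directions.
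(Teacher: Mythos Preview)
Your proof is correct and follows essentially the same route as the paper's: an induction on $k$ built on the one-step estimate $\|y^{k+1}-\hat c\|\le\|y^{k}-\hat c\|+\sum_{n=0}^{N_k-1}\beta_{k,n}$ (obtained from nonexpansivity of $P_{\Omega_k,w_k}$, the fact that $\hat c$ is fixed by it, and $\|v^{k,n}\|\le 1$), followed by passing to the limit and contradicting \eqref{eq:additionally}. The only cosmetic differences are that you make the nonexpansivity/fixed-point reasoning for $P_{\Omega_k,w_k}$ more explicit and invoke Theorem~\ref{thm:=00003D000020dsap} rather than Theorem~\ref{thm:fejer} for the existence of $y^*$; note, however, that only ordinary nonexpansiveness (not firm nonexpansiveness) is actually used.
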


\begin{proof}
\textit{\ref{item:limitone}} The proof is by induction on $k$. For
$k=1$, we use (\ref{eq:2.11}), (\ref{eq:-6}), \eqref{eq:}, the
nonexpansivity of the operator $P_{\Omega_{0},w_{0}}$ and the boundedness
of the sequence $\left\{ v^{0,n}\right\} _{n=0}^{\infty}$, to obtain
(since $\hat{c}\in C$)

\begin{align*}
\left\Vert y^{k}-\hat{c}\right\Vert  & =\left\Vert y^{1}-\hat{c}\right\Vert =\left\Vert P_{\Omega_{0},w_{0}}\left(y^{0,N_{0}}\right)-P_{\Omega_{0},w_{0}}\left(\hat{c}\right)\right\Vert \\
 & \le\left\Vert y^{0}+\sum_{n=0}^{N_{0}-1}\beta_{0,n}v^{0,n}-\hat{c}\right\Vert \le\left\Vert y^{0}-\hat{c}\right\Vert +\left\Vert \sum_{n=0}^{N_{0}-1}\beta_{0,n}v^{0,n}\right\Vert \\
 & \le\left\Vert y^{0}-\hat{c}\right\Vert +\sum_{n=0}^{N_{0}-1}\beta_{0,n}\le r\sum_{n=0}^{N_{0}-1}\beta_{0,n}=r\sum_{\ell=0}^{k-1}\sum_{n=0}^{N_{\ell}-1}\beta_{\ell,n}.
\end{align*}
Suppose that $k>1$ and make the inductive assumption that 
\begin{equation}
\left\Vert y^{k-1}-\hat{c}\right\Vert \le r\sum_{\ell=0}^{k-2}\sum_{n=0}^{N_{\ell}-1}\beta_{\ell,n}.\label{eq:-2}
\end{equation}
Since $\hat{c}\in C$ (recalling (\ref{eq:-6})), we have 
\begin{equation}
\left\Vert y^{k}-\hat{c}\right\Vert =\left\Vert P_{\Omega_{k-1},w_{k-1}}\left(y^{k-1,N_{k-1}}\right)-P_{\Omega_{k-1},w_{k-1}}\left(\hat{c}\right)\right\Vert .\label{eq:-3}
\end{equation}
By \eqref{eq:2.11}, 
\begin{equation}
y^{k-1,N_{k-1}}=y^{k-1}+\sum_{n=0}^{N_{k-1}-1}\beta_{k-1,n}v^{k-1,n}.\label{eq:-7}
\end{equation}
Since the operator $P_{\Omega_{k},w_{k}}$ is nonexpansive, we obtain
from \eqref{eq:-3} and \eqref{eq:-7} 
\begin{equation}
\left\Vert y^{k}-\hat{c}\right\Vert \leq\left\Vert y^{k-1}+\sum_{n=0}^{N_{k-1}-1}\beta_{k-1,n}v^{k-1,n}-\hat{c}\right\Vert .\label{eq:-4}
\end{equation}
By the triangle inequality and the boundedness of the sequence $\left\{ v^{k-1,n}\right\} _{n=0}^{\infty}$,
combined with \eqref{eq:-4}, we obtain 
\begin{equation}
\left\Vert y^{k}-\hat{c}\right\Vert \le\left\Vert y^{k-1}-\hat{c}\right\Vert +\sum_{n=0}^{N_{k-1}-1}\beta_{k-1,n}.\label{eq:-5}
\end{equation}
The inductive assumption \eqref{eq:-2} and inequality \eqref{eq:-5},
imply (since $r\ge1$) 
\[
\left\Vert y^{k}-\hat{c}\right\Vert \le r\sum_{\ell=0}^{k-2}\sum_{n=0}^{N_{\ell}-1}\beta_{\ell,n}+r\sum_{n=0}^{N_{k-1}-1}\beta_{k-1,n}=r\sum_{\ell=0}^{k-1}\sum_{n=0}^{N_{\ell}-1}\beta_{\ell,n}.
\]
This completes the induction and shows that \textit{ \ref{item:limitone}
} indeed holds.\\

\textit{ \ref{item:limittwo} }Assume to the contrary that $y^{*}\in D$.
Taking the limit, as $k\rightarrow\infty,$ on both sides of (\ref{eq:claim-i})
yields 
\[
\left\Vert y^{*}-\hat{c}\right\Vert \le r\sum_{\ell=0}^{\infty}\sum_{n=0}^{N_{\ell}-1}\beta_{\ell,n},
\]
which contradicts (\ref{eq:additionally}), thus proving that $y^{*}$
cannot be in $D.$ 
\end{proof}
\begin{rem}
Lemma \ref{lem:divergeSM} shows that if (\ref{eq:claim-i}) and (\ref{eq:additionally})
hold, then alternative \textit{\ref{item:altone} }of Theorem \ref{thm:fejer}
cannot hold. The case $r=1$ is of theoretical interest only because
then the initialization point $y^{0}$ is feasible, contrary to the
prevailing situation in applications wherein the feasibility-seeking
is initialized outside the feasible set $C.$\\
 
\end{rem}

The next corollary provides an insight into a necessary choice of
an initialization point of Algorithm \ref{alg:super-process} in order
to establish its convergence to a point in $D$. 
\begin{cor}
\label{cor:chioceip} Under the assumptions of Lemma \ref{lem:divergeSM},
assume that the sequence $\left\{ y^{k}\right\} _{k=0}^{\infty}$
generated by Algorithm \ref{alg:super-process} with step-sizes $\left\{ \left\{ \beta_{k,n}\right\} _{n=0}^{N_{k}-1}\right\} _{k=0}^{\infty}$
converges to a point $y^{\ast}\in D$ and there exists a point $\hat{c}\in C$
such that 
\begin{equation}
d\left(\hat{c},D\right)>\sum_{k=0}^{\infty}\sum_{n=0}^{N_{k}-1}\beta_{k,n}.\label{eq:-1}
\end{equation}
Then the following inequality holds

\begin{equation}
\left\Vert y^{0}-\hat{c}\right\Vert \ge\left(d\left(\hat{c},D\right)\left(\sum_{k=0}^{\infty}\sum_{n=0}^{N_{k}-1}\beta_{k,n}\right)^{-1}-1\right)\sum_{n=0}^{N_{0}-1}\beta_{0,n}.\label{eq:cond1-1}
\end{equation}
\end{cor}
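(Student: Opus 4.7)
The plan is to derive the inequality as a direct contrapositive of \cref{lem:divergeSM}\textit{\ref{item:limittwo}}, tuning the free parameter $r$ in the lemma optimally against the hypothesis \eqref{eq:-1}. Throughout, write
\[
S:=\sum_{k=0}^{\infty}\sum_{n=0}^{N_{k}-1}\beta_{k,n}\quad\text{and}\quad S_{0}:=\sum_{n=0}^{N_{0}-1}\beta_{0,n},
\]
so that \eqref{eq:-1} reads $d(\hat{c},D)>S$ and, in particular, $\rho:=d(\hat{c},D)/S>1$.

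First I would fix an arbitrary $r\in[1,\rho)$ and observe that the additional hypothesis \eqref{eq:additionally} of \cref{lem:divergeSM}\textit{\ref{item:limittwo}} is automatically satisfied, because
\[
r<\rho=\frac{d(\hat{c},D)}{S}\quad\Longrightarrow\quad d(\hat{c},D)>rS.
\]
Thus, if the remaining hypothesis \eqref{eq:} of the lemma were also to hold for this $r$ and the given initialization $y^{0}$, then \cref{lem:divergeSM}\textit{\ref{item:limittwo}} would force $y^{\ast}\notin D$, contradicting the standing assumption $y^{\ast}\in D$. Consequently, \eqref{eq:} must fail for every $r\in[1,\rho)$, i.e.,
\[
\|y^{0}-\hat{c}\|>(r-1)S_{0}\quad\text{for all }r\in[1,\rho).
\]

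Finally I would let $r\uparrow\rho$ in this strict inequality to conclude
\[
\|y^{0}-\hat{c}\|\ge(\rho-1)S_{0}=\left(d(\hat{c},D)\,S^{-1}-1\right)S_{0},
\]
which is exactly \eqref{eq:cond1-1}. There is no real obstacle here: the only small subtlety is that the strict inequality $d(\hat{c},D)>rS$ demanded by \eqref{eq:additionally} prevents us from plugging in $r=\rho$ directly, but this is handled cleanly by the approximation argument $r\uparrow\rho$. The hypothesis $r\ge1$ of \cref{lem:divergeSM} is consistent with this limit because \eqref{eq:-1} guarantees $\rho>1$, so the interval $[1,\rho)$ is nonempty.
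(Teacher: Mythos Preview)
Your argument is correct and follows precisely the same route as the paper's own proof: for each $r\in[1,\rho)$ the hypothesis \eqref{eq:additionally} is automatic, so the contrapositive of \cref{lem:divergeSM}\textit{\ref{item:limittwo}} forces $\|y^{0}-\hat{c}\|>(r-1)S_{0}$, and then one passes to the limit $r\uparrow\rho$. The paper compresses the last step into the phrase ``Inequality \eqref{eq:cond1-1} now follows by \eqref{eq:-1},'' whereas you spell out the approximation $r\uparrow\rho$ explicitly; otherwise the two proofs are identical.
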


\begin{proof}
Clearly, 
\[
d\left(\hat{c},D\right)>r\sum_{k=0}^{\infty}\sum_{n=0}^{N_{k}-1}\beta_{k,n},
\]
for any real $1\le r<d\left(\hat{c},D\right)\left(\sum_{k=0}^{\infty}\sum_{n=0}^{N_{k}-1}\beta_{k,n}\right)^{-1}$.
Since the limit point of the sequence generated by Algorithm \ref{alg:super-process}
belongs to $D$, we must have, by Lemma \ref{lem:divergeSM}, that
for each such $r$,
\[
\left\Vert y^{0}-\hat{c}\right\Vert >\left(r-1\right)\sum_{n=0}^{N_{0}-1}\beta_{0,n}.
\]
Inequality \eqref{eq:cond1-1} now follows by \eqref{eq:-1}. 
\end{proof}
\begin{rem}
Note that Lemma \ref{lem:divergeSM} and Corollary \ref{cor:chioceip}
are, in particular, true for $D=C_{\min}:=\{x\in C\mid\;\phi(x)\leq\phi(y){\text{ for all }}y\in C\}$.\\
 
\end{rem}

Under the assumption \eqref{eq:-1}, Corollary \ref{cor:chioceip}
provides a non-trivial necessary condition, namely, \eqref{eq:cond1-1},
for the convergence of any sequence generated by Algorithm \ref{alg:super-process}
to a point in $D$. However, the aforesaid condition is not sufficient
to this end, even if the assumption \eqref{eq:-1} holds. This observation
is demonstrated in the following example. 
\begin{example}
\label{exa:counterex}Let $X:=\mathbb{R}$, let $\phi:X\rightarrow X$
be defined by $\phi\left(x\right):=x^{2}$ for each $x\in X$, and
define $C:=\left[0,10\right]$. Clearly, $C$ is a closed and convex
subset of $\mathbb{R}$. Note that $\phi$ is 20-Lipschitz continuous
on $C$, that is, $\left\Vert \phi\left(x\right)-\phi\left(x^{\prime}\right)\right\Vert \le20\left\Vert x-x^{\prime}\right\Vert $
for each $x,x^{\prime}\in C$. Set $D:=C_{\min}=\left\{ 0\right\} $,
$\hat{c}:=8\in C$ and define the sequence of step-sizes $\left\{ \beta_{k,0}\right\} _{k=0}^{\infty}$
of length $1$ (that is, $N_{k}=1$ for each $k=0,1\dots$) by $\beta_{k,0}:=2^{-k}$
for each $k=0,1\dots$. We then have 
\[
d\left(\hat{c},D\right)=8>2=\sum_{k=0}^{\infty}\beta_{k,0}=\sum_{k=0}^{\infty}\sum_{n=0}^{N_{k}-1}\beta_{k,n}.
\]
Now choose any $y^{0}\ge13$. Then we have 
\[
\sum_{n=0}^{N_{0}-1}\beta_{0,n}=\beta_{0,0}=1\,\,\,{\rm and\,\,\,}\left(d\left(\hat{c},D\right)\left(\sum_{k=0}^{\infty}\sum_{n=0}^{N_{k}-1}\beta_{k,n}\right)^{-1}-1\right)=3\textcolor{red}{.}
\]
Therefore, 
\[
\left\Vert y^{0}-\hat{c}\right\Vert \ge3=\left(d\left(\hat{c},D\right)\left(\sum_{k=0}^{\infty}\sum_{n=0}^{N_{k}-1}\beta_{k,n}\right)^{-1}-1\right)\sum_{n=0}^{N_{0}-1}\beta_{0,n}.
\]
However, the sequence $\left\{ y^{k}\right\} _{k=0}^{\infty}$, generated
by Algorithm \ref{alg:super-process}, satisfies for each positive
integer $k$, 
\[
y^{k}=10-\sum_{\ell=1}^{k-1}2^{-\ell}\underset{k\rightarrow\infty}{\rightarrow}9\not\in D.
\]
Similarly, if we choose in this example $C:=\left[0,3\cdot2^{-1}\right]$,
$\hat{c}:=1\in C$, and keep all other settings the same, then Condition
\eqref{eq:-1} fails and Condition \eqref{eq:cond1-1} trivially holds.
But the sequence $\left\{ y^{k}\right\} _{k=0}^{\infty}$, generated
by Algorithm \ref{alg:super-process}, satisfies for each positive
integer $k$, 
\[
y^{k}=3\cdot2^{-1}-\sum_{\ell=1}^{k-1}2^{-\ell}\underset{k\rightarrow\infty}{\rightarrow}2^{-1}\not\in D.
\]
\\
 
\end{example}

Taking $D$ in Corollary \ref{cor:chioceip} to be a level-set of
the function $\phi$ within $C$, the next corollary follows. 
\begin{cor}
\label{cor:levelsetex} Under the assumptions of Lemma \ref{lem:divergeSM},
pick any $x^{*}\in C$ together with $y^{0}\in X$, and define $D:=\left\{ c\in C\mid\phi\left(c\right)\le\phi\left(x^{*}\right)\right\} $.
Choose positive step-sizes $\left\{ \left\{ \beta_{k,n}\right\} _{n=0}^{N_{k}-1}\right\} _{k=0}^{\infty}$
and $\hat{c}\in C$ such that 
\begin{equation}
d\left(\hat{c},D\right)>\sum_{k=0}^{\infty}\sum_{n=0}^{N_{k}-1}\beta_{k,n}\label{eq:cond1}
\end{equation}
and 
\begin{equation}
\left\Vert y^{0}-\hat{c}\right\Vert <\left(d\left(\hat{c},D\right)\left(\sum_{k=0}^{\infty}\sum_{n=0}^{N_{k}-1}\beta_{k,n}\right)^{-1}-1\right)\sum_{n=0}^{N_{0}-1}\beta_{0,n}.\label{eq:cond2}
\end{equation}
If $y^{*}$ is the limit point of a sequence $\left\{ y^{k}\right\} _{k=0}^{\infty}$
generated by Algorithm \ref{alg:super-process}, then $y^{*}\notin D$,
that is, $\phi\left(y^{*}\right)>\phi\left(x^{*}\right)$.\\
 
\end{cor}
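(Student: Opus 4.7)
The plan is to apply Lemma \ref{lem:divergeSM}(\ref{item:limittwo}) to the set $D$ defined in the corollary after selecting an appropriate value of the parameter $r$. Writing $S:=\sum_{k=0}^{\infty}\sum_{n=0}^{N_{k}-1}\beta_{k,n}$ and $\sigma_{0}:=\sum_{n=0}^{N_{0}-1}\beta_{0,n}$ (both positive by the positivity assumption on the step-sizes), I would like to produce an $r\ge1$ that simultaneously satisfies the hypothesis $\|y^{0}-\hat{c}\|\le(r-1)\sigma_{0}$ of \eqref{eq:} and the hypothesis $d(\hat{c},D)>rS$ of \eqref{eq:additionally}.

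The first step is to verify that such an $r$ exists. Assumption \eqref{eq:cond2} rearranges to
\begin{equation*}
1+\frac{\|y^{0}-\hat{c}\|}{\sigma_{0}} \;<\; \frac{d(\hat{c},D)}{S},
\end{equation*}
so the interval
\begin{equation*}
I \;:=\; \left[\,1+\frac{\|y^{0}-\hat{c}\|}{\sigma_{0}},\; \frac{d(\hat{c},D)}{S}\,\right)
\end{equation*}
is nonempty. Moreover, assumption \eqref{eq:cond1} gives $d(\hat{c},D)/S>1$, so $I\subset[1,\infty)$. Any $r\in I$ therefore satisfies $r\ge1$ as well as the two required inequalities after elementary rearrangement, so the hypotheses of Lemma \ref{lem:divergeSM} are met for this choice of $r$.

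The second step is to invoke Lemma \ref{lem:divergeSM}(\ref{item:limittwo}) with this $r$ to conclude that the limit $y^{*}$ of the sequence $\{y^{k}\}_{k=0}^{\infty}$ must lie outside $D$. Since Theorem \ref{thm:=00003D000020dsap} (or, equivalently, Theorem \ref{thm:fejer}) already guarantees $y^{*}\in C$, and $D=\{c\in C\mid \phi(c)\le \phi(x^{*})\}$, the exclusion $y^{*}\notin D$ is equivalent to the strict inequality $\phi(y^{*})>\phi(x^{*})$, which is the desired conclusion.

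There is essentially no serious obstacle here; the corollary is a bookkeeping translation of Lemma \ref{lem:divergeSM} in which the abstract parameter $r$ is eliminated by choosing it from the explicit interval $I$. The only points worth noting are that the strict inequality in \eqref{eq:cond2} is precisely what keeps $I$ nonempty, and that the positivity of the step-sizes ensures $\sigma_{0}>0$ so that the rearrangement producing $I$ is legitimate.
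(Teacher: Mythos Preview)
Your proof is correct and follows essentially the same route as the paper: the paper obtains Corollary~\ref{cor:levelsetex} as the contrapositive of Corollary~\ref{cor:chioceip} (specialized to the level set $D$), and the proof of Corollary~\ref{cor:chioceip} is precisely the construction of an $r$ in your interval $I$ followed by an appeal to Lemma~\ref{lem:divergeSM}. You have simply collapsed the two steps into one by invoking Lemma~\ref{lem:divergeSM}\ref{item:limittwo} directly, which is entirely legitimate.
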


\section{Conclusion}

In this paper we presented a condition under which a superiorization
method (SM) that uses negative gradient descent steps in its perturbations
fails to yield a superior outcome. This means that if $x^{\ast}$
is the limit point of the feasibility-seeking algorithm without perturbations,
then under conditions (presented in Corollary \ref{cor:levelsetex}),
the corresponding superiorization algorithm fails to reach a feasible
point with objective function value smaller or equal than that of
$x^{\ast}$. 

The guarantee question of the SM is, to date, only partially resolved,
but an inverse of this ``negative condition'' will have to be included
and assumed to hold in any future mathematical guarantee claim for
the SM. The condition is important for practitioners who use the SM
because it is avoidable in experimental work with the SM, thus increasing
the success rate of the method in real-world applications. In future
practical implementations of users of the SM it would be advisable
to choose the initialization point far enough from the feasible set,
so as to avoid the negative condition from occurring.

\bigskip{}

\textbf{Acknowledgments.} We thank the anonymous referee for valuable
comments and remarks that helped in improving our work. The work of
Kay Barshad and Yair Censor is supported by the Cooperation Program
in Cancer Research of the German Cancer Research Center (DKFZ) and
Israel's Ministry of Innovation, Science and Technology (MOST). The
work of Yair Censor is supported also by the ISF-NSFC joint research
plan Grant Number 2874/19 and by the U.S. National Institutes of Health
Grant Number R01CA266467. Kay Barshad, Walaa Moursi, Tyler Weames
and Henry Wolkowicz thank the Natural Sciences and Engineering Research
Council of Canada for its support.

\bigskip{}

\textbf{Data Availability.} Data sharing not applicable to this article
as no data sets were generated or analyzed during the current study.\bigskip{}

\textbf{Compliance with Ethical Standards. }The authors have no potential
conflicts of interest to declare that are relevant to the content
of this article.

\bigskip{}

\textbf{Competing Interests. }The authors have no competing interests
to declare that are relevant to the content of this article.

\bigskip{}

\end{document}